\documentclass[11pt]{article}
\usepackage[T1]{fontenc}
\usepackage{lmodern}
\usepackage{amsmath,amssymb,amsthm,mathtools}
\usepackage{geometry}
\usepackage{microtype}
\usepackage[colorlinks=true,linkcolor=blue,citecolor=blue,urlcolor=blue]{hyperref}
\newtheorem{theorem}{Theorem}[section]
\newtheorem{lemma}[theorem]{Lemma}

\newtheorem{proposition}[theorem]{Proposition}
\newtheorem{remark}[theorem]{Remark}

\DeclareMathOperator{\Inf}{Inf}

\newcommand{\calA}{\mathcal A}
\newcommand{\calB}{\mathcal B}
\newcommand{\calC}{\mathcal C}
\newcommand{\calD}{\mathcal D}
\newcommand{\calE}{\mathcal E}
\newcommand{\calF}{\mathcal F}

\newcommand{\calH}{\mathcal H}
\newcommand{\calK}{\mathcal K}
\newcommand{\calU}{\mathcal U}
\title{Proof of Fishburn's latent-subset conjecture}
\author{Yuxian Dong, Jianxi Mao
\thanks{
	Corresponding author.
    \newline\hspace*{3mm}
    {\it Email addresses:}\quad
    yuxiand@hotmail.com (Yuxian Dong); maojx@dlut.edu.cn (Jianxi Mao)}
}
\date{\footnotesize
 School of Mathematical Sciences, Dalian University of Technology, Dalian 116024, P.R. China\\
}
\begin{document}
\maketitle

\begin{abstract}
Fishburn's latent-subset conjecture, proposed in 1987 and revisited in 1988, asserts that for every dual intersecting family $\mathcal F\subseteq 2^{[n]}$, there exists $i\in[n]$ such that $|\mathcal F^L(i)|\geq|\mathcal F(i)|$.
Here $\mathcal F^L$ is the family of the subsets of members of $\mathcal F$ that do not belong to $\mathcal F$. 
We prove the conjecture using the recent weighted star inequality of Chang, Liu, and Liu.
\end{abstract}

\section{Introduction}
A family $\calF\subseteq2^{[n]}$ is \emph{intersecting} if $A\cap A'\ne\varnothing$ for all $A,A'\in\calF$.
For a family $\calA\subseteq 2^{[n]}$, its family of
\emph{lower latent subsets} is
\[
\calA^L
=\{T\subseteq[n]:T\notin\calA,\ 
  T\subseteq A\text{ for some }A\in\calA\}.
\]
Kleitman and Magnanti~\cite{KleitmanMagnanti} proved that every intersecting family $\calA$ satisfies $|\calA^L|\ge|\calA|$. 

A family $\calF\subseteq 2^{[n]}$ is \emph{dual intersecting} if $A\cap A'\ne\varnothing$ and $A\cup A'\ne[n]$ for all $A,A'\in\calF$. 
Equivalently, both $\calF$ and the family of complements of its members are intersecting.
For $i\in[n]$, the \emph{$i$-star} of $\calF$ is $\calF(i)=\{A\in\calF:i\in A\}$.
Clearly, every star is intersecting.
Write
$$\mathcal F^L(i)=\{T\in\mathcal F^L:i\in T\}$$
for the family of lower latent subsets of $\mathcal F$ containing $i$.
Fishburn~\cite{Fishburn,FishburnReview} conjectured that, for every dual intersecting family, there exists $i\in[n]$ such that $|\calF^L(i)|\ge|\calF(i)|$. 
He proved that this is true when $\min_{i\in[n]}|\calF(i)|\le n$ or $\min_{i\in[n]}|\calF(i)|<8$~\cite{Fishburn}.
In this paper, we completely prove the conjecture.

\begin{theorem}\label{thm:fishburn}
Suppose that $n\ge1$ and $\calF\subseteq2^{[n]}$ is dual intersecting.
Then there is an $i\in[n]$ such that
\[
|\calF^L(i)|\ge|\calF(i)|.
\]
\end{theorem}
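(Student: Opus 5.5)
The plan is to argue by averaging over $i$ and to reduce the existence statement to a single global weighted inequality. Suppose, for contradiction, that $|\calF^L(i)|<|\calF(i)|$ for every $i\in[n]$. Then for every probability vector $(\lambda_i)_{i\in[n]}$ we get $\sum_i\lambda_i|\calF^L(i)|<\sum_i\lambda_i|\calF(i)|$. Exchanging the order of summation gives
\[
\sum_{T\in\calF^L}\lambda(T)<\sum_{A\in\calF}\lambda(A),\qquad \lambda(S):=\sum_{i\in S}\lambda_i .
\]
It therefore suffices to find one weighting $\lambda$ for which the reverse inequality $\sum_{T\in\calF^L}\lambda(T)\ge\sum_{A\in\calF}\lambda(A)$ holds.

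The natural first choice is the uniform weighting. With it, the target becomes $\sum_{T\in\calF^L}|T|\ge\sum_{A\in\calF}|A|$. This is a size-weighted strengthening of the Kleitman--Magnanti inequality $|\calF^L|\ge|\calF|$. Plain Kleitman--Magnanti is not enough, because latent sets are smaller than the sets they sit inside. This is where I would invoke the weighted star inequality of Chang, Liu and Liu. I expect it to say that for an intersecting family, a suitable weighted count of the lower shadow outside the family dominates the same weighted count of the family, with weights compatible with membership of individual elements, i.e.\ stars.

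To use it, I would split the comparison into stars. Each $\calF(i)$ is intersecting, and $\calF^L(i)$ contains every subset of a member of $\calF(i)$ that contains $i$ and is not in $\calF$. Hence a star-wise application of the weighted inequality, followed by a sum over $i$, should give the target up to boundary terms. These boundary terms come from latent sets that contain $i$ but lie below members of $\calF\setminus\calF(i)$, and from sets of $\calF(i)$ whose subsets containing $i$ are all in $\calF$.

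Dual intersection enters through the complement family $\calF^c=\{[n]\setminus A:A\in\calF\}$, which is also intersecting. I would use it to control these boundary terms, pairing large members of $\calF$ with small members of $\calF^c$. It should also rule out the extremal configurations in which the uniform average fails, such as a family concentrated on large sets near $[n]$. If the uniform weights still fail, I would instead choose $\lambda$ adapted to the family, for example proportional to the degrees $|\calF(i)|$, and rerun the exchange of summation.

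The main obstacle is matching the precise hypotheses of the weighted star inequality to the set-up: the latent family of the whole of $\calF$ restricted to the $i$-star is not literally the latent family of the star $\calF(i)$. Verifying that this discrepancy only helps, or is absorbed using the dual intersecting condition, is the step I expect to require the most care.
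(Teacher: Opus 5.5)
There is a genuine gap. Your opening reduction is valid but does no work. A probability vector $\lambda$ with $\sum_{T\in\calF^L}\lambda(T)\ge\sum_{A\in\calF}\lambda(A)$ exists if and only if some single coordinate $i$ satisfies $|\calF^L(i)|\ge|\calF(i)|$. So choosing $\lambda$ is exactly as hard as choosing $i$, and both of your concrete choices fail. Take $n\ge4$ and $\calF=\{\{1,j\}:2\le j\le n\}$. It is dual intersecting (all members contain $1$, and every union has at most $3<n$ elements), and it covers every point. Here $\calF^L=\{\varnothing,\{1\},\{2\},\dots,\{n\}\}$, so
$\sum_i\lambda_i\bigl(|\calF^L(i)|-|\calF(i)|\bigr)=-(n-2)\lambda_1$.
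Hence every weighting with $\lambda_1>0$ fails. That includes the degree-proportional one and the uniform one: your target $\sum_{T\in\calF^L}|T|\ge\sum_{A\in\calF}|A|$ reads $n\ge 2(n-1)$, which is false. A valid weighting must vanish on the coordinates where $\calF$ is concentrated, and your plan has no mechanism for locating them.

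You have also misidentified the tool. The Chang--Liu--Liu inequality is not a lower-shadow (Kleitman--Magnanti-type) statement but a weighted Chv\'atal-type one. For a \emph{maximal} intersecting $\calB$ and a nonnegative decreasing $\omega$, it bounds $\sum_{S\in\calB}\omega(S)$ by a $\lambda$-average of the star weights $\sum_{S\ni i}\omega(S)$, with $\lambda$ determined by the Fourier coefficients of $\mathbf{1}_{\calB}$. Your ``star-wise application with boundary terms'' therefore has nothing to attach to.

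The missing idea is how to choose $i$, and from what object. The paper extends $\calF$ to a maximal intersecting family $\calB$ and takes $i$ minimizing $\Inf_i(\mathbf{1}_{\calB})$; in the example above this is never $1$. Let $\calD$ be the downset generated by $\calF$. Then $\calD(i)=\calF(i)\sqcup\calF^L(i)$ exactly, so no boundary terms arise, and it suffices to prove $2|\calB\cap\calD(i)|\le|\calD(i)|$.

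With $i=n$, complementation in $[n-1]$ turns this into $2|\check{\calE}\cap\calC|\le|\check{\calE}|$, where $\calE=\{T\subseteq[n-1]:T\cup\{n\}\in\calD\}$ and $\calC=2^{[n-1]}\setminus\calB$. The condition $A\cup A'\ne[n]$ is used precisely to make $\check{\calE}$ an intersecting upset, and $\calC$ is a downset.

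The CLL inequality is then applied not to $\calF$ but to the antipodal upset $\calK=\calU\cup\{T\cup\{m+1\}:T\in\calU^\dagger\}$ on $[m+1]$. Placing $m+1$ last gives $\lambda_{m+1}=\Inf_{m+1}(\mathbf{1}_{\calK})=1-|\calU|/2^{m-1}$. This yields the normalized bound $|\calU\cap\calA|\le\frac{|\calU|}{2^{m-1}}\max_j|\calA(j)|$ for any nonempty intersecting upset $\calU$ and downset $\calA$ on $[m]$.

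Apply this with $m=n-1$, $\calU=\check{\calE}$, $\calA=\calC$. Minimality of $\Inf_n(\mathbf{1}_{\calB})$ is equivalent to $|\calC(j)|\le 2^{n-3}$ for all $j\in[n-1]$, and combining the two closes the argument. None of these ingredients appears in your proposal, so as it stands it is not a proof.
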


Fishburn noted that this problem is closely related to Chv\'atal's conjecture~\cite{FishburnReview}.
A family $\calD\subseteq 2^{[n]}$ is a \emph{downset} if $A\in\calD$ and $B\subseteq A$ imply $B\in\calD$.
Chv\'atal's conjecture~\cite{Chvatal74} asserts that every downset $\calD$ has an intersecting subfamily of maximum cardinality of the form $\calD(i)$ for some $i\in[n]$.
Kleitman~\cite{Kleitman79} proposed a stronger weighted version.
Friedgut, Kahn, Kalai, and Keller studied related correlation inequalities~\cite{FKKK}.
Chang, Liu, and Liu~\cite{CLL} recently proved the weighted conjecture.
Keevash~\cite{Keevash} proved Kahn's flow conjecture, a stronger form of Chv\'atal's conjecture.
Ellis, Filmus, and Friedgut~\cite{EFF} subsequently gave a short spectral proof of Chv\'atal's conjecture and a strengthening concerning projection packings.

We will use the explicit weighted star inequality in~\cite[Proposition~5.3]{CLL} to derive the bound in Lemma~\ref{lem:normalized}.
We then prove the minimum-influence inequality in Theorem~\ref{thm:witness} and deduce Theorem~\ref{thm:fishburn}.

The paper is organized as follows.
Section~\ref{sec:pre} collects the preliminaries and the weighted star inequality. 
In Section~\ref{sec:main}, we prove the normalized star bound and the minimum-influence inequality, and then deduce Theorem~\ref{thm:fishburn}.

\section{Preliminaries}
\label{sec:pre}
A family $\calU\subseteq2^{[n]}$ is an \emph{upset} if $A\in\calU$ and $A\subseteq B\subseteq[n]$ imply $B\in\calU$.
A \emph{maximal intersecting} family is an intersecting family not properly contained in another intersecting family.
A family $\mathcal A\subseteq2^{[n]}$ is called \emph{antipodal} if it contains exactly one of $A$ and $[n]\setminus A$ for every $A\subseteq [n]$.
We use the following characterization.

\begin{lemma}[\cite{CLL}, Proposition~4.1]\label{lem:maximal}
A family $\calB\subseteq2^{[n]}$ is maximal intersecting if and only if
it is an upset and antipodal. In particular, every maximal intersecting family has $2^{n-1}$ members.
\end{lemma}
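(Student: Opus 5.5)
We prove the two directions separately and then count, using only the definitions of intersecting, upset and antipodal.

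\emph{Maximal intersecting implies upset and antipodal.} Let $\calB$ be maximal intersecting and take $A\in\calB$ and $B\supseteq A$. Every member of $\calB$ meets $A$, so it also meets $B$. Hence $\calB\cup\{B\}$ is intersecting, and maximality forces $B\in\calB$; so $\calB$ is an upset. For antipodality, $\calB$ cannot contain both $A$ and $[n]\setminus A$, since they are disjoint. Now suppose $\calB$ contains neither of them. Since $A\notin\calB$, maximality gives some $C\in\calB$ with $C\cap A=\varnothing$, that is, $C\subseteq[n]\setminus A$. Because $\calB$ is an upset, this gives $[n]\setminus A\in\calB$, a contradiction. (When $n\ge1$ the case $A=\varnothing$ is handled the same way: $\varnothing\notin\calB$, so $[n]\in\calB$ by the same argument.)

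\emph{Upset and antipodal implies maximal intersecting.} Let $\calB$ be an upset and antipodal. Suppose $A,A'\in\calB$ are disjoint. Then $A'\subseteq[n]\setminus A$, and the upset property puts $[n]\setminus A$ in $\calB$ together with $A$, contradicting antipodality. So $\calB$ is intersecting. For maximality, take any $B\notin\calB$. Antipodality gives $[n]\setminus B\in\calB$, and this set is disjoint from $B$, so $\calB\cup\{B\}$ is not intersecting.

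\emph{Counting.} The $2^n$ subsets of $[n]$ split into $2^{n-1}$ complementary pairs $\{A,[n]\setminus A\}$, which are distinct pairs because $n\ge1$. An antipodal family picks exactly one set from each pair, so it has exactly $2^{n-1}$ members.

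The only delicate point is the "neither" case of antipodality in the first direction, where maximality and the upset property have to be combined. It is short once one observes that a set disjoint from $A$ lies inside the complement of $A$.
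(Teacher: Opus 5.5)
Your proof is correct and complete. Both directions and the count follow from the definitions as you argue them.

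The paper gives no proof of this lemma; it imports it as Proposition~4.1 of Chang, Liu, and Liu. Your argument is the standard elementary verification, so it simply makes that citation self-contained.

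The one place to tighten is the parenthetical about $A=\varnothing$. There, maximality alone does not give you a set $C\in\calB$ disjoint from $A$, because $\calB\cup\{\varnothing\}$ fails to be intersecting automatically. You can fix this in either of two ways:
\begin{itemize}
\item First observe that $\calB\neq\varnothing$, since $\{[n]\}$ is intersecting when $n\ge1$.
\item Run your argument on whichever of $A$ and $[n]\setminus A$ is nonempty. This is legitimate because the ``neither'' hypothesis is symmetric in the pair.
\end{itemize}
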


For $\calA\subseteq2^{[n]}$, write
\[
 \check{\calA}=\left\{[n]\setminus A:A\in\calA\right\}
\]
for the family of complements of its members.
For any set $A$, let $\mathbf{1}_A$ denote its indicator function, equal to $1$ on $A$ and $0$ elsewhere.
We identify $2^{[n]}$ with $\{0,1\}^n$ via $T\mapsto\mathbf{1}_T$, and write $f(T)$ for $f(\mathbf{1}_T)$.

For a Boolean function $f:\{0,1\}^n\to\{0,1\}$ and a fixed $i\in[n]$, define its influence at $i$ by
\[
\Inf_i(f)
=\frac{1}{2^{n-1}}
\sum_{T\subseteq[n]\setminus\{i\}}
\bigl|f(T\cup\{i\})-f(T)\bigr|.
\]
Since \(f\) is Boolean-valued,
$
|f(T\cup\{i\})-f(T)|
=\mathbf{1}_{\{f(T\cup\{i\})\ne f(T)\}}.
$
Consequently,
\[
\Inf_i(f)
=\Pr\bigl(f(T\cup\{i\})\ne f(T)\bigr),
\]
where \(T\) is chosen uniformly at random from \(2^{[n]\setminus\{i\}}\). Thus, \(\Inf_i(f)\) is the probability that flipping the \(i\)-th coordinate changes the value of \(f\).

For a fixed \(S\subseteq[n]\), the Fourier coefficient of \(f\) is
\[
\widehat f(S)=\frac{1}{2^n}\sum_{T\subseteq[n]}f(T)(-1)^{|S\cap T|}.
\]
The Fourier coefficient \(\widehat f(S)\) measures the contribution of the parity pattern on \(S\) to \(f\).
In particular, when \(S=\varnothing\), we have 
\[
\widehat f(\varnothing)=\frac{1}{2^n}\sum_{T\subseteq[n]}f(T)=\mathbb E[f],
\] 
where $\mathbb E[f]$ is the mean of \(f\).
By the standard Fourier formula for influences
(see, e.g., \cite[Equation~(10)]{CLL}),
\begin{equation}\label{eq:fourier-influence}
 \Inf_i(f)=4\sum_{S\ni i}\widehat f(S)^2.
\end{equation}

We use the following form of the weighted star inequality.
Fix a total order $\prec$ on $[n]$, and write $\max_\prec S$ for the last element of a nonempty set $S\subseteq[n]$.

\begin{proposition}[\cite{CLL}, Proposition~5.3]
\label{prop:CLL}
Let $\calB\subseteq2^{[n]}$ be maximal intersecting, and set $f=\mathbf{1}_{\calB}$.
For $i\in[n]$, define
\[
 \lambda_i
 =4\sum_{\substack{\varnothing\ne S\subseteq[n]\\
\max_\prec S=i}}
\widehat f(S)^2.
\]
Then $\lambda_i\ge0$ for every $i\in[n]$ and $\sum_{i=1}^n\lambda_i=1$. Moreover, if $\omega:2^{[n]}\to\mathbb R_{\ge0}$ satisfies
\[
A\subseteq B\quad\Longrightarrow\quad
\omega(A)\ge\omega(B),
\] 
then
\begin{equation}\label{eq:weighted-star}
\sum_{S\in\calB}\omega(S)
\le
\sum_{i=1}^n\lambda_i
\sum_{S\ni i}\omega(S)
\le
\max_{i\in[n]}
\sum_{S\ni i}\omega(S).
\end{equation}
\end{proposition}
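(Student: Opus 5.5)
The first two assertions are elementary, and the real content is the left inequality in \eqref{eq:weighted-star}; my plan reduces it to unweighted downsets and then runs a Fourier/induction argument along the order $\prec$. First, $\lambda_i\ge 0$ because $\lambda_i$ is a sum of squares. For the normalization, Parseval gives
\[
4\sum_{S\ne\varnothing}\widehat f(S)^2=4\bigl(\mathbb E[f^2]-\mathbb E[f]^2\bigr)=4\Bigl(\tfrac12-\tfrac14\Bigr)=1,
\]
using $f^2=f$ and $|\calB|=2^{n-1}$ (Lemma~\ref{lem:maximal}). Every nonempty $S$ has exactly one $\prec$-maximum, so these terms partition among the $\lambda_i$, and $\sum_i\lambda_i=1$. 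The right inequality in \eqref{eq:weighted-star} then holds because a convex combination is at most the maximum.

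For the left inequality, I would first reduce to indicator weights. Write $\omega=\int_0^\infty \mathbf 1_{\calD_t}\,dt$ with $\calD_t=\{S:\omega(S)\ge t\}$. Each $\calD_t$ is a downset because $\omega$ is decreasing. Both sides of the inequality are linear in $\omega$, so it suffices to prove
\[
|\calB\cap\calD|\le\sum_i\lambda_i|\calD(i)|
\]
for every downset $\calD$.

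Next I would put this in Fourier form, with $g=\mathbf 1_{\calD}$. Antipodality gives $f(T)+f([n]\setminus T)=1$, so $f-\tfrac12$ is odd. Hence $\widehat f(\varnothing)=\tfrac12$ and $\widehat f(S)=0$ for every nonempty even $S$. This yields
\[
|\calB\cap\calD|=\tfrac12|\calD|+2^n\sum_{|S|\text{ odd}}\widehat f(S)\widehat g(S).
\]
Since $x_i=\tfrac12(1-(-1)^{x_i})$, we also have $|\calD(i)|=\tfrac12|\calD|-2^{n-1}\widehat g(\{i\})$. Combined with $\sum_i\lambda_i=1$, the target becomes
\[
2^n\sum_{|S|\text{ odd}}\widehat f(S)\widehat g(S)\le -2^{n-1}\sum_i\lambda_i\,\widehat g(\{i\}).
\]
This inequality pairs a monotone increasing $f$ with a monotone decreasing $g$, where $\widehat g(\{i\})\ge0$ and $\widehat f(\{i\})\le0$.

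I would prove it by induction on $n$, peeling off the $\prec$-last coordinate, say $n$. Let $f_0(T)=f(T)$ and $f_1(T)=f(T\cup\{n\})$ for $T\subseteq[n-1]$. Then $f_0\le f_1$, and $f_1(T)=1-f_0([n-1]\setminus T)$. Two facts drive the induction:
\begin{itemize}
\item $\lambda_n=\Inf_n(f)$ by \eqref{eq:fourier-influence}, since $\max_\prec S=n$ exactly when $n\in S$; this term is controlled by $f_1-f_0$.
\item For $i\prec n$, $\lambda_i$ is the same quantity computed for $h=\tfrac12(f_0+f_1)$ on $[n-1]$.
\end{itemize}
The function $h$ is monotone, $[0,1]$-valued and self-dual ($h(T)+h(T^c)=1$), but no longer Boolean. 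So the induction has to run over this fractional class, with a statement of the form
\[
\sum_T h\,\omega\le\sum_i\lambda_i(h)\sum_{T\ni i}\omega+\Bigl(1-\sum_i\lambda_i(h)\Bigr)\cdot(\text{correction}).
\]
I would split $\calD$ into $\calD_0=\{T:T\in\calD\}$ and $\calD_1=\{T:T\cup\{n\}\in\calD\}$, with $\calD_1\subseteq\calD_0$, and then match the $n$-th coordinate's contribution against $\lambda_n|\calD(n)|$.

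The main obstacle is finding the right strengthened inductive hypothesis for fractional self-dual monotone $h$. The deficit $1-4\operatorname{Var}(h)$ has to be charged to a correction term that the last coordinate's influence exactly pays for. I would first try Cauchy--Schwarz on the odd-level terms together with the FKG/Harris inequality for the pair $(h,g)$. I would test the candidate hypothesis on dictators and majority before committing to it.
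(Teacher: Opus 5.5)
The paper gives no proof of this proposition. It is quoted from Chang--Liu--Liu and used as a black box, so your proposal has to stand on its own. The routine parts are correct:
\begin{itemize}
\item $\lambda_i\ge0$, and Parseval gives $\sum_i\lambda_i=4\bigl(\mathbb E[f^2]-\mathbb E[f]^2\bigr)=1$;
\item the right-hand inequality holds;
\item the layer-cake reduction to $\omega=\mathbf 1_{\calD}$ is valid;
\item $\widehat f(S)=0$ for nonempty even $S$, and the Fourier reformulation is right;
\item for $h=\tfrac12(f_0+f_1)$ you have $\lambda_i(f)=\lambda_i(h)$ for $i\prec n$ and $\lambda_n=\Inf_n(f)$.
\end{itemize}
But the left inequality is the entire content of the proposition. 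For it you only sketch the shape of an induction and say the inductive hypothesis is still to be found. That is a genuine gap: nothing beyond the trivial parts is proved.

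The tools you name will not close it.
\begin{itemize}
\item Harris's inequality for $(f,g)$ gives only $|\calB\cap\calD|\le\tfrac12|\calD|$. This is weaker than the target, because $|\calD(i)|\le\tfrac12|\calD|$ for every downset. The amount still to be won is exactly $2^{n-1}\sum_i\lambda_i\widehat g(\{i\})$.
\item Cauchy--Schwarz on $\sum_{|S|\text{ odd}}\widehat f(S)\widehat g(S)$ bounds its absolute value, so it cannot give the negative upper bound you need.
\end{itemize}
The peeling step also fails with the natural correction $\tfrac12\sum_T\omega(T)$, which is its value at $h\equiv\tfrac12$. Write $f_0=h-d$, $f_1=h+d$ and $\delta(T)=\omega(T)-\omega(T\cup\{n\})\ge0$, and apply the hypothesis to $h$ with weight $\omega(T)+\omega(T\cup\{n\})$. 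The step then closes exactly when $\sum_T d(T)\delta(T)\ge\tfrac12\lambda_n\sum_T\delta(T)$. This fails for the majority function on $[3]$ with the natural order and $\omega=\mathbf 1_{\{\varnothing\}}$. There $d(\varnothing)=0$ and $\lambda_3=\tfrac12$, so the chain only gives $|\calB\cap\{\varnothing\}|\le\tfrac14$, whereas the required bound $\sum_i\lambda_i|\calD(i)|$ is $0$.

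Structurally, let $M_k=\mathbb E[f\mid x_1,\dots,x_k]$, $D_k=\tfrac12\bigl(M_k|_{x_k=1}-M_k|_{x_k=0}\bigr)\ge0$ and $\partial_k\omega=\omega|_{x_k=0}-\omega|_{x_k=1}\ge0$. Then $\lambda_k=4\,\mathbb E[D_k^2]$, and the left inequality is equivalent to $\sum_k\mathbb E[D_k\,\partial_k\omega]\ge2\sum_k\mathbb E[D_k^2]\,\mathbb E[\partial_k\omega]$. In the example this holds with equality in total, but it fails for $k=3$; the deficit is paid by the surplus at $k=1,2$. So any proof must let the surplus at some coordinates pay for the deficit at others. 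One way is a correction term that retains more information about $h$ than $\sum_i\lambda_i(h)$. Finding and verifying such a mechanism is the actual content of Chang--Liu--Liu's proposition, and it is missing from your plan.
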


\begin{remark}
The indicator function of any downset is nonnegative and decreasing, so it is an admissible weight in~\eqref{eq:weighted-star}.
If $z$ is last in the order $\prec$, then $\max_\prec S=z$ if and only if $z\in S$. 
Hence, by
\eqref{eq:fourier-influence},
\begin{equation}\label{eq:last-coefficient}
 \lambda_z
 =4\sum_{S\ni z}\widehat f(S)^2
 =\Inf_z(f).
\end{equation}
\end{remark}



The coefficients \(\lambda_i\) depend only on \(\mathcal B\) and the chosen order. Since they are nonnegative and sum to \(1\), the middle expression in \eqref{eq:weighted-star} is a weighted average of the total weights of the coordinate stars. Thus, \eqref{eq:weighted-star} bounds the weight of \(\mathcal B\) by this weighted average, and hence by the largest star weight.
A useful choice is \(\omega=\mathbf{1}_{\mathcal D}\), where \(\mathcal D\) is a downset. Indeed, \(A\subseteq B\) implies \(\mathbf{1}_{\mathcal D}(A)\ge \mathbf{1}_{\mathcal D}(B)\), so this weight satisfies the required monotonicity condition. Thus, the weighted sums become cardinalities, and \eqref{eq:weighted-star} gives
\[
|\mathcal B\cap\mathcal D|
\le \sum_{i=1}^{n}\lambda_i|\mathcal D(i)|
\le \max_{i\in[n]}|\mathcal D(i)|.
\]

\section{Proof of the main theorem}
\label{sec:main}
For $\calA\subseteq2^{[n]}$, define its Boolean dual by
\[
 \calA^\dagger
 =\{T\subseteq[n]:[n]\setminus T\notin\calA\}
 =2^{[n]}\setminus\check{\calA}.
\]
The following is immediate by definition.
\begin{lemma}\label{basiclemma}
Suppose that $\calA$ is an upset.
Then the Boolean dual $\calA^\dagger$ is an upset.
If $\calA$ is intersecting, then $\calA\subseteq\calA^\dagger$.
\end{lemma}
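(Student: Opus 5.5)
The statement to prove is Lemma~\ref{basiclemma}. Both claims follow directly from the definitions. The plan is to unwind $\calA^\dagger=\{T:[n]\setminus T\notin\calA\}$ and use, for the first claim, that complementation reverses inclusion, and for the second, that a set and its complement are disjoint.

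\textbf{Upset.} Let $T\in\calA^\dagger$ and $T\subseteq T'\subseteq[n]$; we must show $T'\in\calA^\dagger$, that is, $[n]\setminus T'\notin\calA$.

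Suppose instead that $[n]\setminus T'\in\calA$. Since $T\subseteq T'$, we have $[n]\setminus T'\subseteq[n]\setminus T$. Because $\calA$ is an upset, this gives $[n]\setminus T\in\calA$, contradicting $T\in\calA^\dagger$. Hence $T'\in\calA^\dagger$, and $\calA^\dagger$ is an upset.

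\textbf{Containment.} Assume $\calA$ is intersecting and let $A\in\calA$. Suppose $A\notin\calA^\dagger$; by definition this means $[n]\setminus A\in\calA$. Then $A$ and $[n]\setminus A$ are two members of $\calA$ with
\[
A\cap([n]\setminus A)=\varnothing,
\]
contradicting the intersecting property. (The two sets may coincide only if $A=\varnothing$, and then $A\cap A=\varnothing$, which is still a contradiction.) Therefore $A\in\calA^\dagger$, so $\calA\subseteq\calA^\dagger$.

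The second claim does not use the upset hypothesis. There is no real obstacle here; the only point needing care is that the upset property is applied to complements, with inclusion reversed.
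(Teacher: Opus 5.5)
Your proof is correct and takes the same approach as the paper, which gives no argument beyond noting that the lemma is ``immediate by definition.'' Your write-up is exactly that unwinding: complementation reverses inclusion for the upset claim, and a set is disjoint from its complement for $\calA\subseteq\calA^\dagger$.
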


We now derive an inequality for intersecting upsets.
This inequality was stated as Conjecture~3.7 in~\cite{FKKK}.
It follows from Keevash's theorem on Kahn's flow conjecture~\cite[Theorem~1.1]{Keevash} and can also be deduced from the argument of Ellis, Filmus, and Friedgut~\cite[Section~2]{EFF}.
For completeness, we give a proof based on the weighted star
inequality of Chang, Liu, and Liu~\cite[Proposition~5.3]{CLL}.

\begin{lemma}\label{lem:normalized}
Let $n\ge1$. 
Suppose that $\calH\subseteq2^{[n]}$ is a nonempty intersecting upset and $\calE\subseteq2^{[n]}$ is a downset. Then
\begin{equation}\label{eq:normalized}
|\calH\cap\calE|
\le \frac{|\calH|}{2^{n-1}}
\max_{i\in[n]}|\calE(i)|.
\end{equation}
\end{lemma}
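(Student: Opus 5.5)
The plan is to reduce Lemma~\ref{lem:normalized} to Proposition~\ref{prop:CLL} by adding one extra coordinate. The new coordinate turns $\calH$ into a maximal intersecting family on $[n+1]$. Placing that coordinate last in the order $\prec$ will produce the factor $|\calH|/2^{n-1}$ through the identity~\eqref{eq:last-coefficient}.

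\textbf{Step 1 (the extension).} Define $\calB\subseteq 2^{[n+1]}$ by
\[
\calB=\calH\cup\{A\cup\{n+1\}:A\in\calH^\dagger\},
\]
where $\calH^\dagger$ is the Boolean dual taken in $[n]$. I will check that $\calB$ is maximal intersecting via Lemma~\ref{lem:maximal}.
\begin{itemize}
\item Intersecting: two members of $\calH$ meet since $\calH$ is intersecting, and two sets containing $n+1$ meet trivially. For $A\in\calH$ and $C\in\calH^\dagger$, suppose $A\cap C=\varnothing$. Then $A\subseteq[n]\setminus C$, so $[n]\setminus C\in\calH$ because $\calH$ is an upset, which contradicts $C\in\calH^\dagger$.
\item Upset: $\calH$ and $\calH^\dagger$ are upsets by Lemma~\ref{basiclemma}, and $\calH\subseteq\calH^\dagger$ by the same lemma. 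Hence adding $n+1$ to a member of $\calH$ keeps it in $\calB$.
\item Antipodal: for $T\subseteq[n]$, the complementary pair in $[n+1]$ is $T$ and $([n]\setminus T)\cup\{n+1\}$. By definition, $[n]\setminus T\in\calH^\dagger$ if and only if $T\notin\calH$, so exactly one of the two lies in $\calB$.
\end{itemize}

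\textbf{Step 2 (the weight).} Let $\calE'=\{A\subseteq[n]:A\in\calE\}\subseteq 2^{[n+1]}$, i.e.\ the sets of $\calE$ viewed in $[n+1]$, none of which contains $n+1$. This is a downset in $2^{[n+1]}$. It satisfies $\calB\cap\calE'=\calH\cap\calE$, $|\calE'(i)|=|\calE(i)|$ for $i\le n$, and $\calE'(n+1)=\varnothing$. Apply Proposition~\ref{prop:CLL} with $\omega=\mathbf 1_{\calE'}$ and an order in which $n+1$ is last:
\[
|\calH\cap\calE|\le\sum_{i=1}^{n}\lambda_i|\calE(i)|\le\Bigl(\sum_{i=1}^n\lambda_i\Bigr)\max_{i\in[n]}|\calE(i)| = (1-\lambda_{n+1})\max_{i\in[n]}|\calE(i)|.
\]
Here the term for $i=n+1$ vanishes because $\calE'(n+1)=\varnothing$, and the last equality uses $\sum_{i=1}^{n+1}\lambda_i=1$.

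\textbf{Step 3 (computing $\lambda_{n+1}$).} By~\eqref{eq:last-coefficient}, $\lambda_{n+1}=\Inf_{n+1}(\mathbf 1_\calB)$. For $T\subseteq[n]$ we have $T\cup\{n+1\}\in\calB$ if and only if $T\in\calH^\dagger$, and $T\in\calB$ if and only if $T\in\calH$. So the value of $\mathbf 1_\calB$ changes exactly when $T\in\calH^\dagger\setminus\calH$. Since $|\calH^\dagger|=2^n-|\calH|$, this gives
\[
\lambda_{n+1}=\frac{2^n-2|\calH|}{2^n},\qquad\text{so}\qquad 1-\lambda_{n+1}=\frac{|\calH|}{2^{n-1}},
\]
which is exactly~\eqref{eq:normalized}.

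The only genuinely non-routine step is Step 1: guessing the extension and checking that it is antipodal and an upset. After that, the factor $|\calH|/2^{n-1}$ comes out automatically from the influence of the added coordinate.
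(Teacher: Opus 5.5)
Your proposal is correct and follows the paper's proof essentially step for step: the same extension $\calH\cup\{A\cup\{n+1\}:A\in\calH^\dagger\}$, the same order with $n+1$ last, the same weight supported on $\calE\subseteq 2^{[n]}$, and the same computation $\lambda_{n+1}=1-|\calH|/2^{n-1}$. Your separate check that the extension is intersecting is redundant, since by Lemma~\ref{lem:maximal} being an antipodal upset already suffices, but it does no harm.
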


\begin{proof}
Define
\[
\mathcal K
=
\mathcal H
\cup
\bigl\{T\cup\{n+1\}:T\in\mathcal H^\dagger\bigr\}
\subseteq 2^{[n+1]}.
\]
By Lemma~\ref{basiclemma}, both $\mathcal H$ and $\mathcal H^\dagger$ are
upsets, and $\mathcal H\subseteq\mathcal H^\dagger$.
We will apply Proposition~\ref{prop:CLL} to $\mathcal K$.
We show that $\mathcal K$ is an upset. 

Let $A\in\mathcal K$
and let $A\subseteq B\subseteq[n+1]$.
If $n+1\notin B$, then neither $A$ nor $B$ contains $n+1$.
Thus $A\in\mathcal H$, and the upward closure of $\mathcal H$
gives $B\in\mathcal H\subseteq\mathcal K$.
If $n+1\in A$, then
\[
A\setminus\{n+1\}\in\mathcal H^\dagger,
\qquad
A\setminus\{n+1\}\subseteq B\setminus\{n+1\}.
\]
Since $\mathcal H^\dagger$ is an upset, we obtain
$B\setminus\{n+1\}\in\mathcal H^\dagger$, and hence
$B\in\mathcal K$.
Suppose that $n+1\notin A$ but $n+1\in B$.
Then
\[
A\in\mathcal H\subseteq\mathcal H^\dagger,
\qquad
A\subseteq B\setminus\{n+1\}.
\]
Again, the upward closure of $\mathcal H^\dagger$ gives
$B\setminus\{n+1\}\in\mathcal H^\dagger$, so $B\in\mathcal K$.
Therefore, $\mathcal K$ is an upset.
For every $T\subseteq[n]$, the definition of $\calH^\dagger$ gives
\[
T\in\calK
\iff T\in\calH  \iff [n]\setminus T\notin\calH^\dagger
 \iff ([n]\setminus T)\cup\{n+1\}\notin\calK.
\]
Since $T$ and $([n]\setminus T)\cup\{n+1\}$ are complements in $[n+1]$, $\calK$ is antipodal. 
Thus Lemma~\ref{lem:maximal} implies that $\calK$ is maximal intersecting.

Apply Proposition~\ref{prop:CLL} to  $\mathcal K\subseteq 2^{[n+1]}$.
Choose a total order $\prec$ on $[n+1]$ in which $n+1$
comes last, and put $f=\mathbf{1}_{\mathcal K}$.
Let $\lambda_1,\ldots,\lambda_{n+1}$ be the coefficients
associated with $f$ and this order.
In particular, Proposition~\ref{prop:CLL} gives
\[
\lambda_i\ge 0
\quad\text{and}\quad
\sum_{i=1}^{n+1}\lambda_i=1.
\]

Since $n+1$ is last in the order, the condition
$\max_{\prec}S=n+1$ is equivalent to $n+1\in S$.
Thus,
\[
\lambda_{n+1}
=
4\sum_{\substack{S\subseteq[n+1]\\ n+1\in S}}
\widehat f(S)^2
=
\Inf_{n+1}(f).
\]
To compute $\Inf_{n+1}(f)$, the definition
of $\mathcal K$ gives, for every $T\subseteq[n]$,
\[
f(T)=\mathbf{1}_{\mathcal H}(T),
\qquad
f(T\cup\{n+1\})=\mathbf{1}_{\mathcal H^\dagger}(T).
\]
Since $\mathcal H\subseteq\mathcal H^\dagger$, these
two values differ precisely when
$T\in\mathcal H^\dagger\setminus\mathcal H$.
Then we have 
\[
\begin{aligned}
\lambda_{n+1}
=\Inf_{n+1}(f)
=\frac{1}{2^n}\sum_{T\subseteq[n]}
  \bigl|f(T\cup\{n+1\})-f(T)\bigr|=\frac{|\mathcal H^\dagger\setminus\mathcal H|}{2^n}.
\end{aligned}
\]

Since $\calH^\dagger=2^{[n]}\setminus\check{\calH}$ and $|\check{\calH}|=|\calH|$,
we have 
\[
 |\calH^\dagger|
 =|2^{[n]}|-|\check{\calH}|
 =2^n-|\calH|,\qquad |\calH^\dagger\setminus\calH|
 =|\calH^\dagger|-|\calH|
 =2^n-2|\calH|.
\]
Therefore
\[
 \lambda_{n+1}
 =\frac{|\calH^\dagger\setminus\calH|}{2^n}
 =1-\frac{|\calH|}{2^{n-1}}.
\]
By Proposition~\ref{prop:CLL}, $\sum_{i=1}^{n+1}\lambda_i=1$. Then
\begin{equation}\label{eq:lambda-base}
 \sum_{i=1}^{n}\lambda_i
 =\frac{|\calH|}{2^{n-1}}.
\end{equation}

For the downset $\calE$,
define $\omega:2^{[n+1]}\to\mathbb R_{\ge0}$ by
\[
 \omega(T)=
 \begin{cases}
  \mathbf{1}_{\calE}(T),&n+1\notin T,\\
  0,&n+1\in T.
 \end{cases}
\]
In particular, every set containing $n+1$ has weight zero.
This weight is nonnegative and decreasing because $\calE$ is a downset.
Indeed, let
$A\subseteq B\subseteq[n+1]$. If $\omega(B)=0$, then
$\omega(A)\ge\omega(B)$ automatically. If $\omega(B)=1$,
then $B\in\calE$, and the downward closure of $\calE$
implies $A\in\calE$. Hence $\omega(A)=\omega(B)=1$.

Since $\omega$ is the indicator
of $\calE$, the total weight of $\calK$ is
$|\calK\cap\calE|$. Moreover, every member of $\calE$
lies in $2^{[n]}$, where $\calK$ agrees with $\calH$.
Therefore,
\[
\sum_{T\in\calK}\omega(T)
=
|\calK\cap\calE|
=
|\calH\cap\calE|.
\]
For each $i\in[n]$, the total weight of the $i$-star
counts precisely the members of $\calE$ that contain $i$.
Thus,
\[
\sum_{\substack{T\subseteq[n+1]\\i\in T}}\omega(T)
=
|\{T\in\calE:i\in T\}|
=
|\calE(i)|.
\]
The $(n+1)$-star has total weight zero, since no member
of $\calE$ contains $n+1$.

Substituting these quantities into
\eqref{eq:weighted-star}, we obtain
\[
|\calH\cap\calE|
\le
\sum_{i=1}^{n}\lambda_i|\calE(i)|
+\lambda_{n+1}\cdot0
=
\sum_{i=1}^{n}\lambda_i|\calE(i)|.
\]

Finally, since each $\lambda_i$ is nonnegative, replacing
every $|\calE(i)|$ by their maximum can only increase
the right-hand side. Using \eqref{eq:lambda-base}, we get
\[
\begin{aligned}
|\calH\cap\calE|
&\le
\left(\sum_{i=1}^{n}\lambda_i\right)
\max_{i\in[n]}|\calE(i)|=
\frac{|\calH|}{2^{n-1}}
\max_{i\in[n]}|\calE(i)|.
\end{aligned}
\]
This completes the proof.
\end{proof}

We now present the following inequality used in the proof of  Theorem~\ref{thm:fishburn}.

\begin{theorem}
\label{thm:witness}
Let $n\ge2$, and let $\calD\subseteq2^{[n]}$ be a downset such that $A\cup A'\ne[n]$ for all $A,A'\in\calD$.
Let $\calB\subseteq2^{[n]}$ be maximal intersecting, and choose $i\in[n]$ with $\Inf_i(\mathbf{1}_{\calB})=\min_{j\in[n]}\Inf_j(\mathbf{1}_{\calB})$.
Then
\[
 2\,|\calB\cap\calD(i)|\le|\calD(i)|.
\]
\end{theorem}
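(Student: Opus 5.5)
The plan is to restrict everything to the link of $i$, reduce the claim to a lower bound for one intersection, and obtain that bound from Lemma~\ref{lem:normalized} applied on the $n-1$ coordinates $[n]\setminus\{i\}$. The minimality of $\Inf_i$ will enter only when we bound the sizes of the stars of the downset that appears.

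\textbf{Step 1: translate to the link of $i$.} Put $X=[n]\setminus\{i\}$. Define
\[
\calB_0=\{T\subseteq X:T\in\calB\},\qquad \calB_1=\{T\subseteq X:T\cup\{i\}\in\calB\},\qquad \calE=\{T\subseteq X:T\cup\{i\}\in\calD\}.
\]
By Lemma~\ref{lem:maximal}, $\calB$ is an antipodal upset. Hence $\calB_0$ is an intersecting upset on $X$. Moreover, $T\in\calB_1$ if and only if $X\setminus T\notin\calB_0$, so $\calB_1$ is the Boolean dual of $\calB_0$ taken in $2^X$. The family $\calE$ is a downset with $|\calE|=|\calD(i)|$ and $|\calB\cap\calD(i)|=|\calB_1\cap\calE|$.

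If $T,X\setminus T\in\calE$, then $(T\cup\{i\})\cup((X\setminus T)\cup\{i\})=[n]$, which the hypothesis on $\calD$ forbids. Hence the family $\check\calE$ of complements in $X$ is an intersecting upset. We may assume $\calE\neq\varnothing$, since otherwise there is nothing to prove.

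\textbf{Step 2: reduce to a lower bound.} From $\calE\setminus\calB_1=\calE\cap\check{\calB_0}$ and complementation in $X$, we get $|\calE\setminus\calB_1|=|\check\calE\cap\calB_0|$. So the claim $2|\calB_1\cap\calE|\le|\calE|$ is equivalent to
\[
|\check\calE\cap\calB_0|\ge|\calE|/2,
\qquad\text{that is,}\qquad
|\check\calE\setminus\calB_0|\le|\calE|/2 .
\]
Now $\check\calE\setminus\calB_0=\check\calE\cap\calE'$, where $\calE'=2^X\setminus\calB_0$ is a downset. Apply Lemma~\ref{lem:normalized} on the $n-1\ge1$ coordinates of $X$, with $\calH=\check\calE$ (a nonempty intersecting upset) and downset $\calE'$. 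This gives
\[
|\check\calE\setminus\calB_0|\le\frac{|\calE|}{2^{n-2}}\max_{j\in X}|\calE'(j)|.
\]
It therefore suffices to show $|\calE'(j)|\le2^{n-3}$ for every $j\ne i$. Since $|\calE'(j)|=2^{n-2}-|\calB_0(j)|$, this is the same as showing $|\calB_0(j)|\ge2^{n-3}$.

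\textbf{Step 3: use minimal influence.} This is the main point where the choice of $i$ matters. For an upset $\calB$ of size $2^{n-1}$, monotonicity gives $\Inf_j(\mathbf 1_\calB)=(|\calB(j)|-|\calB\setminus\calB(j)|)/2^{n-1}$, so $|\calB(j)|=2^{n-2}(1+\Inf_j)$. Minimality of $\Inf_i$ then gives $|\calB(i)|\le|\calB(j)|$. Cancelling the common part $\calB(i)\cap\calB(j)$, we get
\[
a:=|\{A\in\calB:j\in A,\ i\notin A\}|\ \ge\ b:=|\{A\in\calB:i\in A,\ j\notin A\}|.
\]
Antipodality pairs each set containing $j$ but not $i$ with its complement, which contains $i$ but not $j$, and exactly one of each pair lies in $\calB$. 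Hence $a+b=2^{n-2}$, and so $|\calB_0(j)|=a\ge2^{n-3}$.

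Substituting into Step 2 gives $|\check\calE\setminus\calB_0|\le|\calE|/2$, which proves the theorem. The dictator example ($n=2$, $\calB$ the star of $1$, $i=1$) shows the hypothesis in Step 3 cannot be dropped, so I expect Step 3 to be the only delicate point. The rest is bookkeeping with complements in $X$.
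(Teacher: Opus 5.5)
Your proof is correct and follows essentially the same route as the paper: restrict to the link of $i$, rewrite $|\calB\cap\calD(i)|$ as $|\check{\calE}\cap\calC|$ with $\calC=2^{X}\setminus\calB_0$, apply Lemma~\ref{lem:normalized} on the $n-1$ coordinates of $X$, and use minimality of the influence together with antipodality to get $|\calC(j)|\le 2^{n-3}$. Your Step~3 compares $|\calB(i)|$ with $|\calB(j)|$ directly and pairs the $2^{n-2}$ sets containing $j$ but not $i$ with their complements. This is a slightly more direct version of the paper's explicit formulas for $\Inf_n$ and $\Inf_j$, but the underlying argument is the same.
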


\begin{proof}
By relabelling the ground set, we may  assume that $i=n$, and
\begin{equation}\label{mini-condition}
\Inf_n(\mathbf{1}_{\mathcal B})
\le \Inf_j(\mathbf{1}_{\mathcal B}),
\qquad j\in[n-1].
\end{equation}

\noindent\textbf{Step 1. Transfer the counting problem to $[n-1]$.}
Define
\[
 \calE=\{T\subseteq[n-1]:T\cup\{n\}\in\calD\},
 \qquad
 \calH=\calB\cap2^{[n-1]},
 \qquad
 \calC=2^{[n-1]}\setminus\calH.
\]
Let $\check{\calE}$ denote the family obtained by
taking complements of the members of $\calE$
relative to $[n-1]$:
\[
\check{\calE}
=\{[n-1]\setminus T:T\in\calE\}.
\]
By Lemma~\ref{lem:maximal}, $\calB$ contains exactly
one member of each complementary pair in $2^{[n]}$.
Thus, for every $T\subseteq[n-1]$,
\begin{equation}\label{eq:B-upper-section}
T\cup\{n\}\in\calB
\iff [n-1]\setminus T\notin\calB\iff [n-1]\setminus T\notin\calH\iff [n-1]\setminus T\in\calC.  
\end{equation}
Also,
\[
 T\cup\{n\}\in\calD(n)\iff T\in\calE\iff [n-1]\setminus T\in\check{\calE}.
\]
Under the bijection $T\cup\{n\}\mapsto[n-1]\setminus T$, these equivalences yield
\begin{equation}\label{eq:witness-reduction}
|\calB\cap\calD(n)|=|\check{\calE}\cap\calC|,
 \qquad
 |\calD(n)|=|\check{\calE}|.
\end{equation}
It suffices to prove that 
\begin{equation}
2\,|\check{\calE}\cap\calC|\le |\check{\calE}|.
\end{equation}

\medskip
\noindent\textbf{Step 2. Verify the hypotheses and apply
Lemma~\ref{lem:normalized}.}
If $\calE=\varnothing$, then $\calD(n)=\varnothing$, and the conclusion follows. 
Assume that $\calE\ne\varnothing$.

Since $\calD$ is a downset, $\calE$ is a downset in $2^{[n-1]}$, and therefore $\check{\calE}$ is an upset.
For $S,T\in\calE$, the hypothesis on $\calD$ gives
\[
(S\cup\{n\})\cup(T\cup\{n\})\ne[n].
\]
Consequently,
\[
([n-1]\setminus S)\cap([n-1]\setminus T)
=[n-1]\setminus(S\cup T)\ne\varnothing.
\]
Thus $\check{\calE}$ is a nonempty intersecting upset.

By Lemma~\ref{lem:maximal}, $\calB$ is an upset.
Hence $\calH$ is an upset in $2^{[n-1]}$, and $\calC$ is a downset.
Applying Lemma~\ref{lem:normalized} to $\check{\calE}$ and $\calC$ on the ground set $[n-1]$, we obtain
\begin{equation}\label{eq:witness-normalized}
 |\check{\calE}\cap\calC|
 \le\frac{|\check{\calE}|}{2^{n-2}}
       \max_{j\in[n-1]}|\calC(j)|.
\end{equation}
\medskip
\noindent\textbf{Step 3. Bound the sizes of $\calC(j)$.}
We will show that $|\calC(j)|\le 2^{n-3}$
for every $j\in[n-1]$.

Put $f=\mathbf{1}_{\calB}$. 
Since $\calB$ is an upset, $f(T\cup\{k\})-f(T)\ge0$ for every $T\subseteq[n]\setminus\{k\}$. 
Hence
\[
\begin{aligned}
 \Inf_k(f)
 =\frac{1}{2^{n-1}}
   \sum_{T\subseteq[n]\setminus\{k\}}
       \bigl(f(T\cup\{k\})-f(T)\bigr)
 =\frac{|\calB(k)|
         -|\{A\in\calB:k\notin A\}|}{2^{n-1}}.
\end{aligned}
\]
Since $\calB$ contains exactly one set from each complementary pair, $|\calB|=2^{n-1}$. 
Therefore
\begin{equation}\label{eq:influence-star}
 \Inf_k(f)
 =\frac{2|\calB(k)|-2^{n-1}}{2^{n-1}}
 =\frac{|\calB(k)|}{2^{n-2}}-1,
 \qquad k\in[n].
\end{equation}
By \eqref{eq:B-upper-section}, the sets in $\calB(n)$ correspond bijectively to $\calC$. 
Hence
\[
 |\calB(n)|=|\calC|=2^{n-1}-|\calH|,
 \qquad
 \Inf_n(f)=1-\frac{|\calH|}{2^{n-2}}.
\]

Now fix $j\in[n-1]$. 
The members of $\calB(j)$ that do not contain $n$ are counted by $|\calH(j)|$.
By \eqref{eq:B-upper-section}, the members of $\calB(j)$ that contain $n$ correspond to the sets in $\calC$ that do not contain $j$. 
Therefore
\[
\begin{aligned}
 |\calB(j)|
 &=|\calH(j)|+|\calC|-|\calC(j)|\\
 &=|\calH(j)|
   +(2^{n-1}-|\calH|)
   -(2^{n-2}-|\calH(j)|)\\
 &=2^{n-2}-|\calH|+2|\calH(j)|.
\end{aligned}
\]
Using \eqref{eq:influence-star} again gives
\[
 \Inf_j(f)
 =\frac{2|\calH(j)|-|\calH|}{2^{n-2}}.
\]

By \eqref{mini-condition}, $\Inf_n(f)\le\Inf_j(f)$.
The formulas above give
\[
 1-\frac{|\calH|}{2^{n-2}}
 \le
 \frac{2|\calH(j)|-|\calH|}{2^{n-2}},
\]
so $|\calH(j)|\ge2^{n-3}$. 
Since $\calH(j)$ and $\calC(j)$ partition the subsets of $[n-1]$
containing $j$,
\[
 |\calC(j)|
 =2^{n-2}-|\calH(j)|
 \le2^{n-3},
 \qquad j\in[n-1].
\]
Thus \eqref{eq:witness-normalized} yields
\[
 |\check{\calE}\cap\calC|
 \le\frac{|\check{\calE}|}{2^{n-2}}
       \max_{j\in[n-1]}|\calC(j)|
 \le\frac{|\check{\calE}|}{2}.
\]
Finally, \eqref{eq:witness-reduction} gives
\[
 2|\calB\cap\calD(n)|
 =2|\check{\calE}\cap\calC|
 \le|\check{\calE}|
 =|\calD(n)|,
\]
as required.
\end{proof}

\begin{proof}[Proof of Theorem~\ref{thm:fishburn}]
If $\calF=\varnothing$, then every $i\in[n]$ satisfies the conclusion. 
Suppose that $\calF\ne\varnothing$, and choose $A\in\calF$.
Since $\calF$ is dual intersecting, applying its defining conditions
to $A$ paired with itself gives
$\varnothing\subsetneq A\subsetneq[n]$.
In particular, $n\ge2$.

Set $\calD=\{T\subseteq[n]:T\subseteq A
\text{ for some }A\in\calF\}$.
The family $\calD$ is a downset.
For $X,Y\in\calD$, choose $A,A'\in\calF$ such that $X\subseteq A$ and
$Y\subseteq A'$.
Since $\calF$ is dual intersecting, $A\cup A'\ne[n]$.
As $X\cup Y\subseteq A\cup A'$, we obtain $X\cup Y\ne[n]$.

Extend $\calF$ to a maximal intersecting family $\calB$, and choose $i\in[n]$ for which $\Inf_i(\mathbf{1}_{\calB})$ is minimum. 
Since $\calF(i)\subseteq\calB\cap\calD(i)$, Theorem~\ref{thm:witness} gives
\[
 2|\calF(i)|
 \le 2|\calB\cap\calD(i)|
 \le|\calD(i)|.
\]
Finally, $\calD=\calF\sqcup\calF^L$, so $|\calD(i)|=|\calF(i)|+|\calF^L(i)|$.
Thus $|\calF(i)|\le|\calF^L(i)|$.
\end{proof}

\section*{Statement on AI use}
This proof was found by GPT-6 Astra, following an approach suggested by the author and using the weighted star inequality of Chang, Liu, and Liu~\cite{CLL}. 
The author simplified and rewrote the proof and takes full responsibility for the results presented in this paper.

\end{document}